\documentclass[11pt,a4paper]{article}
\usepackage[latin1]{inputenc}
\usepackage{amsmath}
\usepackage{amsthm}
\usepackage{amsfonts}
\usepackage{amsfonts,amsthm,latexsym,amsmath,amssymb,amscd,epsfig,psfrag,enumerate}
\usepackage{graphics,graphicx, bezier, float, color, hyperref}
\usepackage{amssymb,url}
\usepackage{multienum}
\usepackage[table]{xcolor}
\usepackage{multicol,multirow}
\usepackage{graphicx}
\usepackage{fancyvrb}
\usepackage{parskip}
\usepackage[toc,page]{appendix}
\sloppy
\setlength{\parindent}{0pt}
\setlength\parskip{0.1in}
\usepackage[top=2.7cm, bottom=2.7cm, left=1.5cm, right=1.5cm]{geometry}
\usepackage{xcolor}

\usepackage{blkarray}
\newtheorem{theorem}{Theorem}[section]
\newtheorem{lemma}{Lemma}[section]

\usepackage[none]{hyphenat}[section]
\newtheorem{definition}{Definition}[section]

\numberwithin{equation}{section}
\numberwithin{table}{section}
\numberwithin{figure}{section}

\title{On $k$-Pell numbers that are Palindromes formed by two distinct Repdigits}
\author{Herbert Batte$^{1} $ and Darius Guma$^{2,*}$}
\date{}

\begin{document}
	\maketitle
	\abstract{ Let $k \ge 2$ and consider the sequence $\{P_n^{(k)}\}_{n \ge 2-k}$ of $k$-generalized Pell numbers, which begins with the first $k$ terms as $0, \ldots, 0, 0, 1$, and satisfies the recurrence relation
	$	P_n^{(k)} = 2P_{n-1}^{(k)} + P_{n-2}^{(k)} + \cdots + P_{n-k}^{(k)}$ for all  $n \ge 2$. In this work, we identify all terms in the $k$-Pell sequence that can be expressed as palindromes formed by concatenating two distinct repdigits.
	} 
	
	{\bf Keywords and phrases}: $k$-generalized Pell numbers; linear forms in logarithms; LLL-algorithm, 2-adic valuation.
	
	{\bf 2020 Mathematics Subject Classification}: 11B39, 11D61, 11D45.
	
	\thanks{$ ^{*} $ Corresponding author}
	
\section{Introduction}
\subsection{Background}

For any integer $k \geq 2$, the sequence of $k$-generalized Pell numbers, denoted by $\{P_n^{(k)}\}_{n\in\mathbb{Z}}$, is defined recursively by
\[
P_n^{(k)} = 2P_{n-1}^{(k)} + P_{n-2}^{(k)} + \cdots + P_{n-k}^{(k)} \quad \text{for all } n \geq 2,
\]
with initial terms \( P_{2-k}^{(k)} = \cdots = P_{-1}^{(k)} = P_{0}^{(k)} = 0 \) and $P_1^{(k)} = 1$. This family of sequences extends the classical Pell sequence, which corresponds to the case $k = 2$ and has been studied extensively in number theory (see, for example, \cite{Herera}). As an example, the first few values for $P_n^{(k)}$, for some small values of $k$, are given in Table \ref{tab} below.
\begin{table}[h!]\label{tab}
	\centering
	\caption{First non-zero $k$--Pell numbers}
	\begin{tabular}{|c|c|l|}
		\hline
		$k$ & Name & First non-zero terms \\
		\hline
		2 & Pell   & 1, 2, 5, 12, 29, 70, 169, 408, 985, 2378, 5741, 13860, 33461, $\ldots$ \\
		3 & 3-Pell & 1, 2, 5, 13, 33, 84, 214, 545, 1388, 3535, 9003, 22929, 58396, $\ldots$ \\
		4 & 4-Pell & 1, 2, 5, 13, 34, 88, 228, 591, 1532, 3971, 10293, 26680, 69156, $\ldots$ \\
		5 & 5-Pell & 1, 2, 5, 13, 34, 89, 232, 605, 1578, 4116, 10736, 28003, 73041, $\ldots$ \\
		6 & 6-Pell & 1, 2, 5, 13, 34, 89, 233, 609, 1592, 4162, 10881, 28447, 74371, $\ldots$ \\
		7 & 7-Pell & 1, 2, 5, 13, 34, 89, 233, 610, 1596, 4176, 10927, 28592, 74815, $\ldots$ \\
		8 & 8-Pell & 1, 2, 5, 13, 34, 89, 233, 610, 1597, 4180, 10941, 28638, 74960, $\ldots$ \\
		9 & 9-Pell & 1, 2, 5, 13, 34, 89, 233, 610, 1597, 4181, 10945, 28652, 75006, $\ldots$ \\
		10 & 10-Pell & 1, 2, 5, 13, 34, 89, 233, 610, 1597, 4181, 10946, 28656, 75020, $\ldots$ \\
		\hline
	\end{tabular}
\end{table}

A \textit{repdigit} in base 10 is a number composed entirely of repeated digits. Such a number can be represented as
\[
N = \underbrace{\overline{d \ldots d}}_{\ell \text{ times}} = d \left( \frac{10^\ell - 1}{9} \right),
\]
where $0 \leq d \leq 9$ and $\ell \geq 1$. Investigations into the Diophantine nature of recurrence sequences, especially concerning their interaction with repdigits, have garnered considerable interest in recent years. Some works have examined whether certain sequence elements can be represented as sums or concatenations of special kinds of numbers.

Initial work by Luca and Banks \cite{banks} provided preliminary insights into such questions, though the scope was limited. In a more specific case, \cite{BraHer} studied $k$-generalized Pell numbers expressible as repdigits, showing that for this instance, we only have $P_{5}^{(3)} = 33$ and $P_{6}^{(4)} = 88$. This was as a result of extending the work in \cite{faye}.

Recent studies have also focused on palindromic numbers formed from repdigits within linear recurrence sequences. A palindrome is an integer that reads identically forward and backward -- for instance 33, 7007, 1220221, or 959. Research into palindromes occurring in recurrence sequences has intensified. Chalebgwa and Ddamulira \cite{chal} found that 151 and 616 are the only Padovan numbers that can be expressed as palindromic concatenations of two distinct repdigits. Likewise, it was shown in \cite{emong} that 595 is the sole such occurrence in the Narayana's cows sequence. Additional related results can be found in \cite{batte2}, \cite{kaggwa} and \cite{batte}. In \cite{batte}, the authors showed that $F_{11}^{(5)}=464$ is the only $k$-Fibonacci number that can be written as palindromic concatenations of two distinct repdigits.

Inspired by this growing body of work, we turn our attention to the $k$-Pell sequence. Specifically, we investigate solutions to the Diophantine equation
\begin{align}\label{eq:main}
	P_n^{(k)} = \overline{\underbrace{d_1 \ldots d_1}_{\ell \text{ times}} \underbrace{d_2 \ldots d_2}_{m \text{ times}} \underbrace{d_1 \ldots d_1}_{\ell \text{ times}}},
\end{align}
where $d_1, d_2 \in \{0,1,\dots,9\}$ with $d_1 > 0$, $d_1 \ne d_2$, and $\ell, m \geq 1$. That is, we search for those $k$--Pell numbers that are palindromes formed by concatenating two distinct repdigits in a symmetric pattern. Since a number must contain at least three digits to meet this structure, we consider $n \geq 7$ throughout the paper. We prove the following.

\subsection{Main Results}\label{sec:1.2l}
\begin{theorem}\label{thm1.1l} 
	The only $k$-Pell numbers satisfying the Diophantine equation \eqref{eq:main} are
	\begin{align*}
		P_{8}^{(3)}=545 \qquad\text{and}\qquad P_{7}^{(5)}=232.
	\end{align*}
\end{theorem}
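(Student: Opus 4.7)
The plan is to follow the standard Baker--Davenport machinery for Diophantine equations that mix a linear recurrence with a decimal-digit pattern, adapted here to the $k$-Pell setting.

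First, I would put both sides of \eqref{eq:main} into a tractable analytic form. Writing each repdigit block as $d_i(10^t-1)/9$ turns the right-hand side into
$$9\,P_n^{(k)} \;=\; d_1\,10^{2\ell+m}\;+\;(d_2-d_1)\,10^{\ell+m}\;+\;(d_1-d_2)\,10^{\ell}\;-\;d_1,$$
so $P_n^{(k)}$ has exactly $2\ell+m$ decimal digits. The $k$-Pell numbers admit a Binet-like expansion $P_n^{(k)}=g_k(\gamma)\gamma^{n-1}+E_k(n)$ with $|E_k(n)|<1/2$, where $\gamma=\gamma_k$ is the dominant root of $x^k-2x^{k-1}-\cdots-1$. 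Equating the two sides and moving the leading term across produces a first three-term linear form in logarithms,
$$\Lambda_1 \;=\;(n-1)\log\gamma \;-\;(2\ell+m)\log 10 \;+\;\log\!\bigl(9g_k(\gamma)/d_1\bigr),$$
whose absolute value is controlled by a constant multiple of $\gamma^{-n}+10^{-\min(\ell,m)}$. Two further rearrangements, peeling off the next-largest term each time, yield companion linear forms $\Lambda_2$ and $\Lambda_3$ that isolate the block lengths $\ell+m$ and $\ell$ respectively.

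To bound the variables, I would apply Matveev's theorem to $\Lambda_1,\Lambda_2,\Lambda_3$ in sequence, each time matching its lower bound against the analytic upper bound in order to eliminate one unknown. This chain produces an effective estimate of the form $n<C(k)\cdot(\log n)^4$, in which $C(k)$ depends polynomially on $k$ via the height and degree of $\gamma_k$. As is standard in the $k$-generalized setting, I would separate the analysis into the range $2\le n\le k+1$, where $P_n^{(k)}$ coincides with the common stable prefix $1,2,5,13,34,89,233,610,\ldots$ and can be handled by hand, and the range $n\ge k+2$, where the Binet estimate is effective. Combining these with the digit-count relation $(2\ell+m-1)\log 10\le (n-1)\log\gamma$ confines $(n,k,\ell,m)$ to an explicit, though enormous, finite set.

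The final stage is the reduction. For each admissible $k$ and each choice of $(d_1,d_2)$ I would run the LLL--algorithm (equivalently, the Baker--Davenport lemma in its three-dimensional form) on the lattice attached to $\Lambda_1$ and, where needed, $\Lambda_2,\Lambda_3$; this typically collapses the bound on $n$ from roughly $10^{30}$ down to a few hundred. A direct search through the surviving tuples, using the recurrence to generate $P_n^{(k)}$ and verifying the palindromic-repdigit pattern, should leave only $P_8^{(3)}=545$ and $P_7^{(5)}=232$. The main obstacle will be tracking the uniformity in $k$: both the Matveev constants and the logarithmic height of $\gamma_k$ grow with $k$, so care is required to show that the range of $k$ surviving the LLL--reduction is genuinely small enough to enumerate, rather than merely finite in principle.
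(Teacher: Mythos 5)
Your overall architecture (repunit decomposition of the palindrome, Binet-type estimate for $P_n^{(k)}$, Matveev applied to successive linear forms, LLL reduction, final computer search) matches the paper's, but there is a genuine gap at the decisive step: nothing in your plan bounds $k$. Your Matveev chain yields $n<C(k)(\log n)^{O(1)}$ with $C(k)$ polynomial in $k$, and in the working range $n\ge k+2$ this gives only $k\le n-2<C(k)(\log n)^{O(1)}$, which is vacuous; so the set you claim is ``explicit, though enormous, finite'' is not in fact finite on the strength of the arguments you describe. You correctly flag ``uniformity in $k$'' as the main obstacle, but you do not supply the idea that resolves it. The paper's resolution is a two-case split at $k=1400$: for $k>1400$ it invokes the Bravo--Herrera estimate $\left|f_k(\alpha)\alpha^{n}-\varphi^{2n}/(\varphi+2)\right|<\left(\varphi^{2n}/(\varphi+2)\right)4\varphi^{-k/2}$, valid for $n<2^{k/2}$, which lets one replace the degree-$k$ number $\alpha$ by the golden ratio and work with linear forms over $\mathbb{Q}(\sqrt{5})$ whose degree is $2$ independently of $k$; Matveev then produces an error term decaying like $\varphi^{-k/2}$, whence an absolute bound $k<4\cdot10^{34}$, which LLL reduces to $k\le 1370$, a contradiction with $k>1400$. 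Without some such device, your reduction phase (``for each admissible $k$\dots'') would require infinitely many LLL runs.

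A secondary gap: you propose to handle the range $2\le n\le k+1$ ``by hand'' via the stable prefix. That range is not finite: for $n\le k+1$ one has $P_n^{(k)}=F_{2n-1}$ with $n$ unbounded as $k$ grows, so you are really asking which odd-indexed Fibonacci numbers are palindromic concatenations of two distinct repdigits --- itself a Baker-method problem. The paper disposes of it by citing the known result that $464$ is the only $k$-generalized Fibonacci number of this shape, so no classical Fibonacci number qualifies. Minor points: the paper needs only two linear forms (isolating first $\ell$, then $m$) for the polynomial bound rather than three, and the resulting exponent on $\log n$ is $2$ rather than $4$; neither affects correctness.
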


The approach used here to prove Theorem \ref{thm1.1l} is as follows.
\begin{enumerate}[(a)]
	\item  We begin by transforming Eq.~\eqref{eq:main} into two separate linear forms in logarithms of algebraic numbers, ensuring both are non-zero and relatively small. Then, we apply Matveev's theorem twice to derive lower bounds on these linear forms, which allows us to obtain polynomial bounds on $n$ in terms of the parameter $k$.
	
	\item We proceed by splitting two cases depending on $k$, i.e., $k \leq 1400$ and $k > 1400$. For the case $k \leq 1400$, we utilize a reduction technique attributed to the LLL-algorithm, to further decrease the size of the bounds, making them more manageable computationally. When $k > 1400$, we apply approximations from \cite{BraHer}, noting that the dominant root of $P_n^{(k)}$ is very close to $\varphi^2$, where $$\varphi := \dfrac{1+\sqrt{5}}{2},$$
	is the golden ratio. This allows us to replace the root with $\varphi$ in our logarithmic estimates, thereby obtaining absolute upper bounds for all involved variables. These bounds are then reduced using the LLL-algorithm.
\end{enumerate}

\section{Some identities on $k$-Pell numbers}
We begin with an identity from \cite{kilis}, which relates $P_n^{(k)}$ with the $m$-th Fibonacci number. It states that
\begin{align}\label{eq:2.1}
	P_n^{(k)} =  F_{2n-1},
\end{align}
for all $1 \le n \le k+1$, while the next term is $P_{k+2}^{(k)} =  F_{2k+3}-1$. The characteristic polynomial of $(P_n^{(k)})_{n\in {\mathbb Z}}$ is given by
\[
\Psi_k(x) = x^k - 2x^{k-1} - \cdots - x - 1.
\]
This polynomial is irreducible in $\mathbb{Q}[x]$ and it possesses a unique real root $\alpha:=\alpha(k)>1$. All the other roots of $\Psi_k(x)$ are inside the unit circle,  see \cite{Herera}. The particular root $\alpha$ can be found in the interval
\begin{align}\label{eq2.3m}
	\varphi^2(1 - \varphi^{-k} ) < \alpha < \varphi^2,\qquad \text{for} \qquad k\ge 2,
\end{align}
as noted in \cite{Herera}. As in the classical case when $k=2$, it was shown in \cite{Herera} that 
\begin{align}\label{fn_up}
	\alpha^{n-2} \le P_n^{(k)}\le \alpha^{n-1}, \qquad \text{holds for all}\qquad n\ge1, \quad k\ge 2.
\end{align}

Next, we rewrite relation \eqref{eq:main} as
\begin{align}\label{eq2.3l}
	P_n^{(k)} &= \overline{\underbrace{d_1 \ldots d_1}_{\ell \text{ times}}\underbrace{d_2 \ldots d_2}_{m \text{ times}}\underbrace{d_1 \ldots d_1}_{\ell \text{ times}}}\nonumber\\
	&=\overline{\underbrace{d_1 \ldots d_1}_{\ell \text{ times}}}\cdot 10^{\ell+m}+\overline{\underbrace{d_2 \ldots d_2}_{m \text{ times}}}\cdot 10^{\ell}+\overline{\underbrace{d_1 \ldots d_1}_{\ell \text{ times}}}\nonumber\\
	&= d_1 \left( \frac{10^\ell - 1}{9} \right)\cdot 10^{\ell+m}+d_2 \left( \frac{10^m - 1}{9} \right)\cdot 10^{\ell}+d_1 \left( \frac{10^\ell - 1}{9} \right) \nonumber  \\
	&=\dfrac{1}{9}\left(d_1\cdot 10^{2\ell+m}-(d_1-d_2)\cdot 10^{\ell+m} +(d_1-d_2)\cdot 10^{\ell}-d_1 \right),
\end{align} 
where \( d_1, d_2 \in \{0, 1, 2, \ldots, 9\}\), \( d_1 > 0 \) and \(d_1\ne d_2\). Since we are working under the assumption that $n\ge 7$, then equation \eqref{eq2.3l} together with \eqref{fn_up} imply that
\begin{align*}
	\alpha^{n-1} &\geq P_n^{(k)} = \frac{1}{9} \left( d_1 \cdot 10^{2\ell+m} - (d_1 - d_2) \cdot 10^{\ell+m} + (d_1 - d_2) \cdot 10^{\ell} - d_1 \right) \\
	&\geq \frac{1}{9} \left( 1 \cdot 10^{2\ell+m} - (9 - 0) \cdot 10^{\ell+m} + (1 - 9) \cdot 10^{\ell} - 9 \right) \\
	&= \frac{1}{9} \cdot 10^{2\ell+m} \left( 1 - \frac{9}{10^{\ell}} - \frac{8}{10^{\ell+m}} - \frac{9}{10^{2\ell+m}} \right) \\
	&\geq \frac{1}{9} \cdot 10^{2\ell+m} \left( 1 - \frac{9}{10^1} - \frac{8}{10^2} - \frac{9}{10^3} \right) \\
	&> 10^{2\ell+m-3},
\end{align*}
where we have used the fact that $1 \leq d_1 \leq 9$, $0 \leq d_2 \leq 9$ and  $\ell, m \geq 1$ are positive integers. Taking logarithms on both sides yields 
\begin{align}\label{eq2.4l}
	2\ell+m<n,
\end{align}
holds for all $n\ge 7$. Again, if we combine equation \eqref{eq2.3l} together with \eqref{fn_up}, we get
\begin{align*}
	\alpha^{n-2} &\leq P_n^{(k)} 
	< 10^{2\ell+m},
\end{align*}
for all $\ell$, $m\ge 1$. Moreover, the above relation also follows from the fact that since $P_n^{(k)}$ has $2\ell+m$ digits, then $P_n^{(k)}<10^{2\ell+m}$. Taking logarithms on both sides of the above inequality gives 
\begin{align}\label{eq2.4m}
n<	6(2\ell+m)+2.
\end{align}

Lastly, for $k\ge 2$, we define
\begin{align*}
	f_k(x):=\dfrac{x - 1}{(k + 1)x^2 - 3kx + k - 1} = \dfrac{x - 1}{k(x^2 - 3x + 1) + x^2 - 1}.
\end{align*}
In Lemma 1 of \cite{BraHer}, Bravo and Herera showed that
\begin{align*}
	0.276<f_k(\alpha)<0.5 \qquad\text{and}\qquad |f_k(\alpha_i)|<1,
\end{align*}
holds for all $2\le i\le k$, where $\alpha_i$ for $i=2,\ldots,k$ are the roots of $\Psi_k(x)$ inside the unit circle. This means that $f_k(\alpha)$ is not an algebraic integer. Moreover, it was also shown in Theorem 3.1 of \cite{Herera} that
\begin{align}\label{eq2.6m}
	P_n^{(k)}=\displaystyle\sum_{i=1}^{k}f_k(\alpha_i)\alpha_i^{n}\qquad\text{and}\qquad\left|P_n^{(k)}-f_k(\alpha)\alpha^{n}\right|<\dfrac{1}{2},
\end{align}
hold for all $k\ge 2$ and $n\ge 2-k$. The first part of \eqref{eq2.6m} provides a Binet-type representation for \( P_n^{(k)} \). In addition, the second inequality in \eqref{eq2.6m} indicates that the impact of the roots located inside the unit circle on \( P_n^{(k)} \) is negligible. A better estimate than \eqref{eq2.6m} appears as Lemma 2 in \cite{BraHer}, but with a more restricted range of $n$ in terms of $k$. It states that if $k\ge 30$ and $n>1$, then
\begin{align}\label{pk_b1}
	\left| f_k(\alpha)\alpha^{n}-\dfrac{\varphi^{2n}}{\varphi+2}\right| <  \left(\dfrac{\varphi^{2n}}{\varphi+2}\right)\frac{4}{\varphi^{k/2}},\qquad {\text{\rm provided}}\qquad n<2^{k/2}.
\end{align}

\section{Methods}
\subsection{Linear forms in logarithms}
We apply four instances of lower bounds due to Baker for nonzero linear forms in three logarithms of algebraic numbers. Various results of this kind exist in the literature, such as those by Baker and W{\"u}stholz \cite{BW}, and Matveev \cite{matl}. Prior to stating these inequalities, we introduce the concept of the height of an algebraic number, which we define below.

\begin{definition}\label{def2.1l}
	Let $ \gamma $ be an algebraic number of degree $ d $ with minimal primitive polynomial over the integers $$ a_{0}x^{d}+a_{1}x^{d-1}+\cdots+a_{d}=a_{0}\prod_{i=1}^{d}(x-\gamma^{(i)}), $$ where the leading coefficient $ a_{0} $ is positive. Then, the logarithmic height of $ \gamma$ is given by $$ h(\gamma):= \dfrac{1}{d}\Big(\log a_{0}+\sum_{i=1}^{d}\log \max\{|\gamma^{(i)}|,1\} \Big). $$
\end{definition}
 In particular, if $ \gamma$ is a rational number represented as $\gamma:=p/q$ with coprime integers $p$ and $ q\ge 1$, then $ h(\gamma ) = \log \max\{|p|, q\} $. 
The following properties of the logarithmic height function $ h(\cdot) $ will be used in the rest of the paper without further reference:
\begin{equation}\nonumber
	\begin{aligned}
		h(\gamma_{1}\pm\gamma_{2}) &\leq h(\gamma_{1})+h(\gamma_{2})+\log 2;\\
		h(\gamma_{1}\gamma_{2}^{\pm 1} ) &\leq h(\gamma_{1})+h(\gamma_{2});\\
		h(\gamma^{s}) &= |s|h(\gamma)  \quad {\text{\rm valid for}}\quad s\in \mathbb{Z}.
	\end{aligned}
\end{equation}
With these properties, it was proved in \cite[Lemma 3]{BraHer} that 
\begin{align}\label{eqh}
	h\left(f_k(\alpha)\right)<4k\log \varphi+k\log (k+1), \qquad \text{for all}\qquad k\ge 2.
\end{align}

A linear form in logarithms is an expression
\begin{equation*}
	\Lambda:=b_1\log \gamma_1+\cdots+b_t\log \gamma_t,
\end{equation*}
where for us $\gamma_1,\ldots,\gamma_t$ are positive real  algebraic numbers and $b_1,\ldots,b_t$ are nonzero integers. We assume, $\Lambda\ne 0$. We need lower bounds 
for $|\Lambda|$. We write ${\mathbb K}:={\mathbb Q}(\gamma_1,\ldots,\gamma_t)$ and $D$ for the degree of ${\mathbb K}$.
We start with the general form due to Matveev, see Theorem 9.4 in \cite{matl}. 

\begin{theorem}[Matveev, see Theorem 9.4 in \cite{matl}]
	\label{thm:Matl} 
	Put $\Gamma:=\gamma_1^{b_1}\cdots \gamma_t^{b_t}-1=e^{\Lambda}-1$. Assume $\Gamma\ne 0$. Then 
	$$
	\log |\Gamma|>-1.4\cdot 30^{t+3}\cdot t^{4.5} \cdot D^2 (1+\log D)(1+\log B)A_1\cdots A_t,
	$$
	where $B\ge \max\{|b_1|,\ldots,|b_t|\}$ and $A_i\ge \max\{Dh(\gamma_i),|\log \gamma_i|,0.16\}$ for $i=1,\ldots,t$.
\end{theorem}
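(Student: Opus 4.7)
The plan is to follow the Baker--Matveev framework for lower bounds on linear forms in logarithms, in which an analytic estimate on a carefully crafted auxiliary function is balanced against a Liouville-type arithmetic inequality. Setting $\mathbb{K}=\mathbb{Q}(\gamma_1,\ldots,\gamma_t)$ of degree $D$ and assuming for contradiction that $|\Gamma|$ violates the stated bound, I would first perform several standard reductions: pass by Kummer descent to a suitable extension in which the $\gamma_i$ have convenient radicals, normalize so that $|b_t|$ is comparable to $B=\max|b_i|$, and reduce to the case where $\log\gamma_1,\ldots,\log\gamma_t$ are $\mathbb{Q}$-linearly independent (any relation yields a strictly shorter linear form which is handled inductively).

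The heart of the argument is the construction of an auxiliary exponential polynomial
\[
\Phi(z)=\sum_{\lambda_0=0}^{L_0}\sum_{\lambda_1=0}^{L_1}\cdots\sum_{\lambda_{t-1}=0}^{L_{t-1}} p(\lambda)\,\Delta_{\lambda_0}\!\!\left(\tfrac{b_t z}{r}\right)\gamma_1^{\lambda_1 z}\cdots \gamma_{t-1}^{\lambda_{t-1}z},
\]
where $\Delta_j(X)=\binom{X}{j}$ are Feldman polynomials and the integer coefficients $p(\lambda)$ are produced by the Thue--Siegel lemma so that $\Phi$, together with a prescribed number of its derivatives, vanishes at $z=0,1,\ldots,S$. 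The ranges $L_i$, the integer $S$, and the order of vanishing must be tuned precisely in terms of $D$, the $A_i$, and $B$; the use of Feldman polynomials (integer-valued on $\mathbb{Z}$ yet of controlled sup-norm) in place of raw monomials, combined with a Kummer-descent sharpening of the heights, is what produces Matveev's exponent $t^{4.5}$ and the constant $30^{t+3}$ rather than the larger exponentials arising from Baker or Baker--W\"ustholz.

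I would then carry out the extrapolation: the assumption that $|\Lambda|$ is extremely small, together with the identity $\gamma_t^{b_t}=\gamma_1^{-b_1}\cdots\gamma_{t-1}^{-b_{t-1}}\,e^{\Lambda}$, converts values of $\Phi$ at new integer points into algebraic numbers of controlled height that are simultaneously analytically tiny (via a Schwarz-lemma estimate on a disc whose radius grows at each extrapolation step) and arithmetically either zero or bounded below by a Liouville inequality using $h(\gamma_i)\le A_i/D$. A zero-estimate of Philippon, or equivalently the multiplicity lemma of W\"ustholz on commutative algebraic groups, then forces $\Phi$ to vanish identically on a positive-dimensional subvariety of the multiplicative group, contradicting the assumed $\mathbb{Q}$-linear independence of the $\log\gamma_i$ unless $|\Lambda|$ already satisfies the claimed lower bound.

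The hard part, and the reason \cite{matl} runs to many pages, is extracting the \emph{explicit} constant: every parameter ($L_i$, $S$, the multiplicities, the disc radii, the extrapolation step size) must be optimized and every loss in the analytic/arithmetic comparison bookkept quantitatively in order to pin down the precise factor $1.4\cdot 30^{t+3}\,t^{4.5}D^2(1+\log D)(1+\log B)$. Faithfully reproducing this optimization is essentially Matveev's entire memoir; in the context of the present work the theorem is therefore invoked as a black box, to be applied later to specific linear forms derived from equation \eqref{eq:main}.
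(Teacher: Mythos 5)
The paper does not prove this statement at all: it is Matveev's theorem, imported verbatim (in the form stated as Theorem 9.4 of \cite{matl}) and used strictly as a black box in the proofs of Lemmas \ref{lem:l}, \ref{lem:m} and \ref{lem:nm}. Your proposal, by its own closing admission, ends up doing the same thing. What you give is a survey-level outline of the Baker--Matveev machinery: an auxiliary exponential polynomial built with Feldman binomial factors, coefficients from the Thue--Siegel lemma, Kummer descent, extrapolation via the Schwarz lemma played off against a Liouville-type lower bound, and a zero or multiplicity estimate. As a description of how such bounds are obtained this is broadly faithful, but every step that actually yields the stated inequality --- the choice of the parameters $L_i$, $S$, the vanishing orders and disc radii, and the quantitative bookkeeping that produces the explicit factor $1.4\cdot 30^{t+3}\, t^{4.5}\, D^2(1+\log D)(1+\log B)A_1\cdots A_t$ --- is deferred to Matveev's memoir, so nothing in the proposal can be verified or would certify the constant. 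There are also minor imprecisions (Matveev's argument does not simply invoke Philippon's or W\"ustholz's zero estimate in the way you suggest; his improvement in the dependence on $t$ comes from a more bespoke inductive construction), but these are beside the main point.

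So: if the task were to actually prove the theorem, your text has a genuine gap --- it is a roadmap, not a proof, and the entire quantitative content is missing. Measured against the paper, however, there is no discrepancy to report, because the paper supplies no proof either; the appropriate ``proof'' in this context is the citation, and your final paragraph correctly recognizes that the result is to be invoked as an external tool when bounding the linear forms arising from equation \eqref{eq:main}.
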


\subsection{Reduction methods}
The bounds derived from Matveev's theorem are often too large to be directly useful for explicit computations. To overcome this limitation, we employ a refinement technique involving the LLL-algorithm. Before explaining this approach, we provide some background.

Let $k$ be a positive integer. A subset $\mathcal{L}$ of the $k$-dimensional real vector space $\mathbb{R}^k$ is defined as a \emph{lattice} if there exists a basis $\{b_1, b_2, \ldots, b_k\}$ of $\mathbb{R}^k$ such that
\begin{align*}
	\mathcal{L} = \sum_{i=1}^{k} \mathbb{Z} b_i = \left\{ \sum_{i=1}^{k} r_i b_i \mid r_i \in \mathbb{Z} \right\}.
\end{align*}
We say that $b_1, b_2, \ldots, b_k$ form a basis for $\mathcal{L}$, or that they span $\mathcal{L}$. We
call $k$ the rank of $ \mathcal{L}$. The determinant $\text{det}(\mathcal{L})$, of $\mathcal{L}$ is defined by
\begin{align*}
	\text{det}(\mathcal{L}) = | \det(b_1, b_2, \ldots, b_k) |,
\end{align*}
with the $b_i$'s being written as column vectors. This is a positive real number that does not depend on the choice of the basis (see \cite{Cas}, Section 1.2).

Given linearly independent vectors $b_1, b_2, \ldots, b_k $ in $ \mathbb{R}^k$, we refer back to the Gram--Schmidt orthogonalization technique. This method allows us to inductively define vectors $b^*_i$ (with $1 \leq i \leq k$) and real coefficients $\mu_{i,j}$ (for $1 \leq j \leq i \leq k$). Specifically,
\begin{align*}
	b^*_i &= b_i - \sum_{j=1}^{i-1} \mu_{i,j} b^*_j,~~~
	\mu_{i,j} = \dfrac{\langle b_i, b^*_j\rangle }{\langle b^*_j, b^*_j\rangle},
\end{align*}
where \( \langle \cdot , \cdot \rangle \)  denotes the ordinary inner product on \( \mathbb{R}^k \). Notice that \( b^*_i \) is the orthogonal projection of \( b_i \) on the orthogonal complement of the span of \( b_1, \ldots, b_{i-1} \), and that \( \mathbb{R}b_i \) is orthogonal to the span of \( b^*_1, \ldots, b^*_{i-1} \) for \( 1 \leq i \leq k \). It follows that \( b^*_1, b^*_2, \ldots, b^*_k \) is an orthogonal basis of \( \mathbb{R}^k \). 
\begin{definition}
	The basis $b_1, b_2, \ldots, b_n$ for the lattice $\mathcal{L}$ is called reduced if
	\begin{align*}
		\| \mu_{i,j} \| &\leq \frac{1}{2}, \quad \text{for} \quad 1 \leq j < i \leq n,~~
		\text{and}\\
		\|b^*_{i}+\mu_{i,i-1} b^*_{i-1}\|^2 &\geq \frac{3}{4}\|b^*_{i-1}\|^2, \quad \text{for} \quad 1 < i \leq n,
	\end{align*}
	where $ \| \cdot \| $ denotes the ordinary Euclidean length. The constant $ {3}/{4}$ above is arbitrarily chosen, and may be replaced by any fixed real number $ y $ in the interval ${1}/{4} < y < 1$ {\rm(see \cite{LLL}, Section 1)}.
\end{definition}
Let $\mathcal{L}\subseteq\mathbb{R}^k$ be a $k-$dimensional lattice  with reduced basis $b_1,\ldots,b_k$ and denote by $B$ the matrix with columns $b_1,\ldots,b_k$. 
We define
\[
l\left( \mathcal{L},y\right)= \left\{ \begin{array}{c}
	\min_{x\in \mathcal{L}}||x-y|| \quad  ;\quad y\not\in \mathcal{L}\\
\min_{0\ne x\in \mathcal{L}}||x|| \quad~~  ;\quad y\in \mathcal{L}
\end{array}
\right.,
\]
where $||\cdot||$ denotes the Euclidean norm on $\mathbb{R}^k$. It is well established that the LLL--algorithm can be used to efficiently compute a lower bound for $l\left( \mathcal{L}, y \right)$ in polynomial time. Specifically, it provides a positive constant $c_1$ such that $l\left( \mathcal{L}, y \right) \geq c_1$ (see \cite{SMA}, Section V.4).
\begin{lemma}\label{lem2.5m}
	Let $y\in\mathbb{R}^k$ and $z=B^{-1}y$ with $z=(z_1,\ldots,z_k)^T$. Furthermore, 
	\begin{enumerate}[(i)]
		\item if $y\not \in \mathcal{L}$, let $i_0$ be the largest index such that $z_{i_0}\ne 0$ and put $\lambda:=\{z_{i_0}\}$, where $\{\cdot\}$ denotes the distance to the nearest integer.
		\item if $y\in \mathcal{L}$, put $\lambda:=1$.
	\end{enumerate}
	Then, we have 
	\[
	c_1:=\max\limits_{1\le j\le k}\left\{\dfrac{||b_1||}{||b_j^*||}\right\}\qquad\text{and}\qquad	
	\delta:=\lambda\dfrac{||b_1||}{c_1}.
	\]
\end{lemma}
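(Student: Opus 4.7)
The plan is to prove the implicit conclusion of the lemma, namely $l(\mathcal{L}, y) \ge \delta$, by expanding an arbitrary lattice vector (or the difference $x - y$ with $x \in \mathcal{L}$) in the Gram--Schmidt orthogonal basis $b_1^*, \ldots, b_k^*$ and isolating one coordinate that is forced to be at least $\lambda$ in absolute value.

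First I would write $x = \sum_{i=1}^{k} x_i b_i$ with $x_i \in \mathbb{Z}$ and, by definition of $z = B^{-1}y$, write $y = \sum_{i=1}^{k} z_i b_i$, so that $x - y = \sum_{i=1}^{k} (x_i - z_i) b_i$. Substituting the Gram--Schmidt identity $b_i = b_i^* + \sum_{j<i} \mu_{i,j} b_j^*$ and regrouping by the orthogonal basis gives
$$
x - y \;=\; \sum_{j=1}^{k} c_j\, b_j^*, \qquad c_j \;=\; (x_j - z_j) + \sum_{i>j} (x_i - z_i)\, \mu_{i,j}.
$$
By orthogonality, $\|x-y\|^2 \ge c_j^2 \|b_j^*\|^2$ for every $j$, so it suffices to exhibit an index $j$ with $|c_j| \ge \lambda$ and combine this with the uniform lower bound $\|b_j^*\| \ge \min_i \|b_i^*\|$.

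For case (i), I take $j$ to be the largest index with $x_j \ne z_j$; since $x_{i_0} \in \mathbb{Z}$ while $z_{i_0}$ is not an integer, such a $j$ exists and satisfies $j \ge i_0$. The crucial observation is that for this maximal $j$ the inner sum above vanishes (as $x_i = z_i$ for $i > j$), leaving $c_j = x_j - z_j$. If $j > i_0$, then $z_j \in \mathbb{Z}$ and $|x_j - z_j| \ge 1 \ge \lambda$; if $j = i_0$, then $|x_j - z_j| \ge \{z_{i_0}\} = \lambda$ by definition of the distance to the nearest integer. Case (ii) is handled by the same expansion applied directly to a nonzero $x \in \mathcal{L}$: writing $x = \sum_j c_j b_j^*$ with $c_j = x_j + \sum_{i>j} x_i \mu_{i,j}$ and choosing $j$ to be the largest index with $x_j \ne 0$ gives $c_j = x_j$, so $|c_j| \ge 1 = \lambda$.

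Finally, the relation $c_1 = \max_{1 \le j \le k} \|b_1\|/\|b_j^*\|$ rearranges to $\min_i \|b_i^*\| = \|b_1\|/c_1$, so combining the two steps yields $\|x-y\| \ge \lambda\, \|b_j^*\| \ge \lambda\, \|b_1\|/c_1 = \delta$ in case (i), and $\|x\| \ge \delta$ in case (ii). The delicate step is the vanishing of the $\mu$-contribution to $c_j$, which hinges on $j$ being the \emph{largest} index of discrepancy; this is the main subtlety to watch for. It is worth noting that the proof itself uses only orthogonality of the Gram--Schmidt vectors, not the LLL reduction inequalities per se; the reducedness of $b_1,\ldots,b_k$ enters only indirectly, to ensure that $c_1$ is bounded (e.g.\ by $2^{(k-1)/2}$) and hence that $\delta$ is of practical computational value.
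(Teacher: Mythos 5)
The paper offers no proof of this lemma: it is imported verbatim from the reduction literature (the surrounding text points to \cite{SMA}, Section V.4, and the underlying result is de Weger's), and as printed it even lacks an explicit conclusion --- the intended content, which you correctly identified, is the lower bound $l(\mathcal{L},y)\ge\delta$. Your argument is the standard proof of that bound and it is correct: writing $x-y=\sum_j c_j b_j^*$ with $c_j=(x_j-z_j)+\sum_{i>j}(x_i-z_i)\mu_{i,j}$, the choice of $j$ as the \emph{largest} index of discrepancy kills the $\mu$-sum, and orthogonality gives $\|x-y\|\ge |x_j-z_j|\cdot\min_i\|b_i^*\|=|x_j-z_j|\cdot\|b_1\|/c_1$, with $|x_j-z_j|\ge\lambda$ in each subcase. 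Two small remarks. First, you silently replaced the paper's condition ``$i_0$ is the largest index with $z_{i_0}\ne 0$'' by de Weger's condition ``$z_{i_0}\notin\mathbb{Z}$''; under the paper's literal wording your chosen $j$ could fall strictly below $i_0$ (this forces $z_{i_0}=x_{i_0}\in\mathbb{Z}$), but then $\lambda=\{z_{i_0}\}=0$ and the asserted inequality is vacuous, so the proof survives either reading --- it would be worth one sentence to say so. Second, your closing observation is accurate and worth keeping: the inequality $l(\mathcal{L},y)\ge\delta$ uses only the Gram--Schmidt orthogonality, while LLL-reducedness enters only through the a priori bound $c_1\le 2^{(k-1)/2}$, which guarantees that $\delta$ is large enough to be computationally useful. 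In short, you have supplied a correct, self-contained proof of a statement the paper merely cites.
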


In our application, we are given real numbers $\eta_0,\eta_1,\ldots,\eta_k$ which are linearly independent over $\mathbb{Q}$ and two positive constants $c_3$ and $c_4$ such that 
\begin{align}\label{2.9m}
	|\eta_0+x_1\eta_1+\cdots +x_k \eta_k|\le c_3 \exp(-c_4 H),
\end{align}
where the integers $x_i$ are bounded as $|x_i|\le X_i$ with $X_i$ given upper bounds for $1\le i\le k$. We write $X_0:=\max\limits_{1\le i\le k}\{X_i\}$. The basic idea in such a situation, from \cite{Weg}, is to approximate the linear form \eqref{2.9m} by an approximation lattice. So, we consider the lattice $\mathcal{L}$ generated by the columns of the matrix
$$ \mathcal{A}=\begin{pmatrix}
	1 & 0 &\ldots& 0 & 0 \\
	0 & 1 &\ldots& 0 & 0 \\
	\vdots & \vdots &\vdots& \vdots & \vdots \\
	0 & 0 &\ldots& 1 & 0 \\
	\lfloor C\eta_1\rfloor & \lfloor C\eta_2\rfloor&\ldots & \lfloor C\eta_{k-1}\rfloor& \lfloor C\eta_{k} \rfloor
\end{pmatrix} ,$$
where $C$ is a large constant usually of the size of about $X_0^k$ . Let us assume that we have an LLL--reduced basis $b_1,\ldots, b_k$ of $\mathcal{L}$ and that we have a lower bound $l\left(\mathcal{L},y\right)\ge c_1$ with $y:=(0,0,\ldots,-\lfloor C\eta_0\rfloor)$. Note that $ c_1$ can be computed by using the results of Lemma \ref{lem2.5m}. Then, with these notations the following result  is Lemma VI.1 in \cite{SMA}.
\begin{lemma}[Lemma VI.1 in \cite{SMA}]\label{lem2.6m}
	Let $S:=\displaystyle\sum_{i=1}^{k-1}X_i^2$ and $T:=\dfrac{1+\sum_{i=1}^{k}X_i}{2}$. If $\delta^2\ge T^2+S$, then inequality \eqref{2.9m} implies that we either have $x_1=x_2=\cdots=x_{k-1}=0$ and $x_k=-\dfrac{\lfloor C\eta_0 \rfloor}{\lfloor C\eta_k \rfloor}$, or
	\[
	H\le \dfrac{1}{c_4}\left(\log(Cc_3)-\log\left(\sqrt{\delta^2-S}-T\right)\right).
	\]
\end{lemma}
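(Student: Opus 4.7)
The plan is to interpret the hypothesis \eqref{2.9m} as asserting that a certain lattice vector $v\in\mathcal L$ is very close to the target point $y=(0,\ldots,0,-\lfloor C\eta_0\rfloor)^T$, and then to contradict this via the LLL lower bound $l(\mathcal L,y)\ge\delta$ supplied by Lemma \ref{lem2.5m}. Concretely, form the integer coefficient vector $x=(x_1,\ldots,x_k)^T$ coming from the hypothesized relation and set
$$
v := \mathcal A\,x \;\in\; \mathcal L.
$$
Because the upper $(k-1)\times k$ block of $\mathcal A$ is $(I_{k-1}\mid 0)$, the first $k-1$ coordinates of $v-y$ are exactly $x_1,\ldots,x_{k-1}$, while the last coordinate equals $\lfloor C\eta_0\rfloor+\sum_{i=1}^{k}\lfloor C\eta_i\rfloor x_i$. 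Writing $\lfloor C\eta_i\rfloor=C\eta_i-\epsilon_i$ with $|\epsilon_i|\le 1/2$ and using the definition of $\Lambda:=\eta_0+x_1\eta_1+\cdots+x_k\eta_k$, this last coordinate rewrites as $C\Lambda-\bigl(\epsilon_0+\sum_{i=1}^{k}\epsilon_i x_i\bigr)$, so by the triangle inequality and $|x_i|\le X_i$ its absolute value is at most $|C\Lambda|+T$.

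The next step is a case split on whether $v=y$. If $v=y$, then the first $k-1$ coordinates force $x_1=\cdots=x_{k-1}=0$, and equating the last coordinate gives $x_k\lfloor C\eta_k\rfloor=-\lfloor C\eta_0\rfloor$, i.e.\ $x_k=-\lfloor C\eta_0\rfloor/\lfloor C\eta_k\rfloor$. This is precisely the first alternative in the conclusion. Otherwise $v\ne y$, so Lemma \ref{lem2.5m} gives $\|v-y\|\ge l(\mathcal L,y)\ge\delta$. Squaring and splitting the Euclidean norm of $v-y$ into its first $k-1$ coordinates and its last coordinate yields
$$
\delta^2 \;\le\; \|v-y\|^2 \;\le\; \sum_{i=1}^{k-1}x_i^2 \;+\; \bigl(|C\Lambda|+T\bigr)^2 \;\le\; S+\bigl(|C\Lambda|+T\bigr)^2.
$$

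From here the conclusion follows cleanly. The hypothesis $\delta^2\ge T^2+S$ gives $\delta^2-S\ge T^2>0$, so taking positive square roots in the displayed inequality yields $|C\Lambda|+T\ge\sqrt{\delta^2-S}$, whence
$$
|\Lambda|\;\ge\;\frac{1}{C}\bigl(\sqrt{\delta^2-S}-T\bigr)\;>\;0.
$$
In particular $\Lambda\ne0$, so we may combine this lower bound with the hypothesis $|\Lambda|\le c_3\exp(-c_4H)$ of \eqref{2.9m} and take logarithms; solving for $H$ gives exactly the displayed upper bound.

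The only delicate point in the argument is the sharp bound on the last coordinate of $v-y$: one needs the rounding errors $\epsilon_i$ to be controlled in $[-1/2,1/2]$ so that their contribution to the last coordinate is at most $T=(1+\sum X_i)/2$ rather than the looser $1+\sum X_i$; with that observation, the only real content is Pythagoras on $\|v-y\|^2$ followed by the LLL-guaranteed inequality $\|v-y\|\ge\delta$. Everything else is algebraic rearrangement. The strength of the lemma therefore rests entirely on how large the LLL algorithm makes $\delta$ relative to $T^2+S$: if $\delta$ can be made comfortably bigger than $\sqrt{T^2+S}$, the resulting bound on $H$ is essentially $\log(Cc_3)/c_4$, which is tiny compared with the Matveev-type bounds we are trying to reduce.
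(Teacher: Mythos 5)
Your proof is correct and is essentially the standard argument behind Smart's Lemma VI.1, which the paper itself quotes without proof: embed the coefficient vector as $v=\mathcal{A}x\in\mathcal{L}$, note that the last coordinate of $v-y$ equals $C\Lambda$ up to a rounding error of size at most $T$, and play the LLL lower bound $\|v-y\|\ge\delta$ (the case $v=y$ giving exactly the first alternative) against the smallness hypothesis on $|\Lambda|$. The only caveat, which you already flag, is that the bound $|\epsilon_i|\le 1/2$ presumes that $\lfloor C\eta_i\rfloor$ denotes rounding to the nearest integer; with the literal floor the error term would be $1+\sum X_i=2T$ rather than $T$, but this is a notational looseness of the source statement, not a gap in your argument.
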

Python is used to perform all the computations in this work.

\section{Proof of Theorem \ref{thm1.1l}}
\subsection{The case $n\le k+1$}
In this case, $P_n^{(k)} =  F_{2n-1}$, so we combine \eqref{eq:main} with \eqref{eq:2.1} and write
\begin{align}\label{fn}
  F_{2n-1} = \overline{\underbrace{d_1 \ldots d_1}_{\ell \text{ times}} \underbrace{d_2 \ldots d_2}_{m \text{ times}} \underbrace{d_1 \ldots d_1}_{\ell \text{ times}}}.
\end{align}
This Diophantine equation was studied in \cite{batte}, in a more general case. It was proved there that $F_{11}^{(5)}=464$ is the only $k$-Fibonacci number that is a palindromic concatenation of two distinct repdigits. This means that the Diophantine equation \eqref{fn} has no solutions when $n\le k+1$. From now on, we work under the assumption that $n\ge k+2$.

\subsection{The case $n\ge k+2$}
We proceed by proving a series of results. We start with the following.
\begin{lemma}\label{lem:l}
	Let $(\ell, m, n, k)$ be a solution to the Diophantine equation \eqref{eq:main} with $n \ge 9$, $k\ge 2$ and $n\ge k+1$. Then
	$$\ell<4.3\cdot 10^{12}k^5 (\log k)^2\log n.$$
\end{lemma}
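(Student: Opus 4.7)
The plan is to transform the Diophantine relation \eqref{eq:main} into a three-term linear form in logarithms of algebraic numbers which is exponentially small in $\ell$, certify its non-vanishing by a Galois argument, and then apply Matveev's theorem (Theorem \ref{thm:Matl}) to obtain a matching lower bound; comparing the two bounds isolates $\ell$.

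First I would isolate the dominant power of $10$ in \eqref{eq2.3l}, writing
\[
P_n^{(k)}-\tfrac{d_1}{9}\cdot 10^{2\ell+m}=\tfrac{1}{9}\bigl(-(d_1-d_2)\,10^{\ell+m}+(d_1-d_2)\,10^\ell-d_1\bigr),
\]
and replace $P_n^{(k)}$ by $f_k(\alpha)\alpha^n$ using the second estimate of \eqref{eq2.6m}, incurring an additional error of at most $1/2$. Since $|d_1-d_2|\le 9$, $d_1\le 9$ and $\ell,m\ge 1$, the right-hand side has absolute value less than $2\cdot 10^{\ell+m}$, so dividing through by $\tfrac{d_1}{9}\cdot 10^{2\ell+m}$ yields
\[
|\Gamma_1|:=\left|\tfrac{9f_k(\alpha)}{d_1}\,\alpha^n\,10^{-(2\ell+m)}-1\right|<\frac{18}{10^\ell},
\]
hence $\log|\Gamma_1|<\log 18-\ell\log 10$. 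I would then check $\Gamma_1\neq 0$ by the standard Galois argument: if $\Gamma_1=0$ then $f_k(\alpha)\alpha^n=\tfrac{d_1}{9}\cdot 10^{2\ell+m}\in\mathbb{Q}$, a value at least $10^3/9>100$; but any embedding $\sigma_j:\mathbb{Q}(\alpha)\hookrightarrow\mathbb{C}$ with $\sigma_j(\alpha)=\alpha_j$ for some $j\ge 2$ fixes this rational, giving $f_k(\alpha_j)\alpha_j^n=\tfrac{d_1}{9}\cdot 10^{2\ell+m}$, which contradicts $|f_k(\alpha_j)|<1$ (the inequality stated just before \eqref{eqh}) and $|\alpha_j|<1$.

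Matveev's theorem is then invoked with $t=3$, $\gamma_1=\alpha$, $\gamma_2=10$, $\gamma_3=9f_k(\alpha)/d_1$, integer exponents $(n,-(2\ell+m),1)$, field $\mathbb{K}=\mathbb{Q}(\alpha)$ of degree $D=k$, and $B=n$ (admissible by \eqref{eq2.4l}). Standard computations give $h(\gamma_1)=(\log\alpha)/k$, $h(\gamma_2)=\log 10$, and by \eqref{eqh}
\[
h(\gamma_3)\le\log 9+4k\log\varphi+k\log(k+1),
\]
so one may choose $A_1=2\log\varphi$, $A_2=k\log 10$, and $A_3$ of order $k^2\log k$ (concretely $A_3\le 5k^2\log(k+1)$ works for all $k\ge 2$). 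Consequently the factor $k^2(1+\log k)(1+\log n)A_1A_2A_3$ has size $O\bigl(k^5(\log k)^2\log n\bigr)$, so Matveev yields $\log|\Gamma_1|>-c\,k^5(\log k)^2\log n$ for an explicit $c$. Confronting this with $\log|\Gamma_1|<\log 18-\ell\log 10$ and dividing by $\log 10$ produces $\ell<4.3\cdot 10^{12}k^5(\log k)^2\log n$.

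The only real hurdle is the numerical bookkeeping: the Matveev prefactor $1.4\cdot 30^6\cdot 3^{4.5}$ already contributes roughly $10^{11}$, so I must control the constants appearing in $A_3$ and carefully handle the substitutions $1+\log k\le 2\log k$ (valid for $k\ge 3$, with a slightly looser coefficient needed in the edge case $k=2$) and $1+\log n\le 2\log n$ (automatic from $n\ge 9$) so that the final product lands below $4.3\cdot 10^{12}\cdot\log 10$. The structural pieces---setting up $\Gamma_1$, the Galois non-vanishing check, and the height estimates via \eqref{eqh}---are entirely routine.
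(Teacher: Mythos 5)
Your proposal is correct and follows essentially the same route as the paper: isolate the dominant term $\tfrac{d_1}{9}10^{2\ell+m}$, approximate $P_n^{(k)}$ by $f_k(\alpha)\alpha^n$ via \eqref{eq2.6m}, certify $\Gamma_1\neq 0$ by conjugating with a Galois automorphism sending $\alpha$ to some $\alpha_j$ inside the unit circle, and apply Matveev with $D=k$, $B=n$ and the height bound \eqref{eqh}, yielding the same $k^5(\log k)^2\log n$ shape. Your constants (e.g.\ $18/10^\ell$ in place of the paper's $11/10^\ell$, and $A_3$ of size $5k^2\log(k+1)$ versus $8k^2\log k$) differ only cosmetically and still land under the stated $4.3\cdot 10^{12}$.
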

\begin{proof}
Let $n$ and $k$ be positive integer solutions to \eqref{eq:main} with $n\ge k+2$ and $k\ge 2$. We rewrite \eqref{eq2.6m} using \eqref{eq2.3l} as
\begin{align*}
	\left|\dfrac{1}{9}\left(d_1\cdot 10^{2\ell+m}-(d_1-d_2)\cdot 10^{\ell+m} +(d_1-d_2)\cdot 10^{\ell}-d_1 \right)-f_k(\alpha)\alpha^{n}\right|<\dfrac{1}{2}.
\end{align*}
Therefore, for all $\ell$, $m\ge 1$, we have
\begin{align*}
	\left|f_k(\alpha)\alpha^{n}-\dfrac{1}{9}\left(d_1\cdot 10^{2\ell+m}\right)\right|
	&<\dfrac{1}{2}+\dfrac{1}{9}\left(|d_1-d_2|\cdot 10^{\ell+m} +|d_1-d_2|\cdot 10^{\ell}+d_1 \right)\\
	&\le \dfrac{1}{2}+ 10^{\ell+m} +10^{\ell}+1 <1.2\cdot 10^{\ell+m}.
\end{align*}
Dividing through both sides of the above inequality by $(d_1\cdot 10^{2\ell+m})/9$, we get
\begin{align}\label{gam1}
	\left|\Gamma_1\right|:=\left|\dfrac{9f_k(\alpha)}{d_1}\cdot 10^{-2\ell-m}\alpha^{n}-1\right|
	&<11\cdot 10^{-\ell}.
\end{align}
Notice that $\Gamma_1\ne 0$, otherwise we would have
\[
d_1\cdot 10^{2\ell + m} = 9 f_k(\alpha) \alpha^{n}.
\]
Let us apply a Galois automorphism from the decomposition field of \(\Psi_k(x)\) over \(\mathbb{Q}\) that sends \(\alpha\) to one of its conjugates \(\alpha_j\) for some \(2 \leq j \leq k\). Taking absolute values after this transformation, we obtain the inequality
\[
10^3 \le d_1 \cdot 10^{2\ell + m} = \left| 9 f_k(\alpha_j) \alpha_j^{n} \right| < 9,
\]
which yields a contradiction. 

The algebraic number field containing the following $\gamma_i$'s is $\mathbb{K} := \mathbb{Q}(\alpha)$ of degree $D = k$. Here, we take $t :=3$,
\begin{equation}\nonumber
	\begin{aligned}
		\gamma_{1}&:=9f_k(\alpha)/d_1,\quad\gamma_{2}:=\alpha,\qquad~~\gamma_{3}:=10,\\
		b_{1}&:=1,\qquad \qquad~~~ b_{2}:=n,\quad\qquad b_{3}:=-2\ell-m.
	\end{aligned}
\end{equation}
Observe that by using inequality \eqref{eqh}, we find
\[
h(\gamma_1) = h\left(9f_k(\alpha)/d_1\right) < \log 9 + 4k \log \varphi + k \log(k+1) + \log 9 < 8k \log k.
\]
Moreover, \( h(\gamma_2) = h(\alpha) < (\log \alpha)/k<(2\log \varphi)/k < 1/k \) and \( h(\gamma_3) = h(10) = \log 10 \). Based on these, we may choose \( A_1 := 8k^2 \log k \), \( A_2 := 1 \), and \( A_3 := k \log 10 \). Furthermore, since \eqref{eq2.4l} asserts that \( 2\ell + m < n \), we can set \( B := n\geq \max\{|b_i|:i=1,2,3\} \).
 Now, by Theorem \ref{thm:Matl},
\begin{align}\label{gam2}
	\log |\Gamma_1| &> -1.4\cdot 30^{6} \cdot 3^{4.5}\cdot k^2 (1+\log k)(1+\log n)\cdot 8k^2\log k\cdot 1\cdot k\log 10\nonumber\\
	&> -9.8\cdot 10^{12}k^5 (\log k)^2\log n.
\end{align}
for all $k\ge 2$ and $n\ge 7$. Comparing \eqref{gam1} and \eqref{gam2}, we get
\begin{align*}
	\ell\log 10-\log 11 &<9.8\cdot 10^{12}k^5 (\log k)^2\log n,
\end{align*}
which gives $\ell<4.3\cdot 10^{12}k^5 (\log k)^2\log n$.
\end{proof}

Next, we prove the following result.
\begin{lemma}\label{lem:m}
	Let $(\ell, m, n, k)$ be a solution to the Diophantine equation \eqref{eq:main} with $n \ge 9$, $k\ge 2$ and $n\ge k+1$. Then
	$$m<5.3\cdot 10^{24}k^9 (\log k)^3(\log n)^2.$$
\end{lemma}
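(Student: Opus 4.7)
The plan is to extract a \emph{second} linear form in three logarithms from equation \eqref{eq:main}, this time designed so that its smallness encodes the scale $10^{-m}$, and then apply Matveev's theorem again while feeding in the upper bound on $\ell$ obtained in Lemma \ref{lem:l}. Concretely, I would group the two leading terms in the decomposition \eqref{eq2.3l}: set $M := d_1 \cdot 10^\ell - (d_1 - d_2)$, which is a positive integer, and a short case analysis on $d_1, d_2, \ell$ shows $M \ge 9$. Combining \eqref{eq2.3l} with the bound from \eqref{eq2.6m} yields
\[
\left| f_k(\alpha)\alpha^n - \frac{M \cdot 10^{\ell+m}}{9} \right| \le \frac{|d_1-d_2| \cdot 10^\ell + d_1}{9} + \frac{1}{2} < 2 \cdot 10^\ell,
\]
and after dividing through by $M \cdot 10^{\ell+m}/9$ we obtain
\[
|\Gamma_2| := \left| \frac{9 f_k(\alpha)}{M} \cdot 10^{-\ell-m}\alpha^n - 1 \right| < 3 \cdot 10^{-m}.
\]

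Before applying Matveev I would verify $\Gamma_2 \ne 0$ by the same Galois-conjugation trick used in Lemma \ref{lem:l}: if $\Gamma_2 = 0$ then $9 f_k(\alpha)\alpha^n = M \cdot 10^{\ell+m}$, and conjugating $\alpha \mapsto \alpha_j$ for some $2 \le j \le k$ (so that $|\alpha_j|<1$ and $|f_k(\alpha_j)|<1$) gives $M \cdot 10^{\ell+m} < 9$, contradicting $M \cdot 10^{\ell+m} \ge 900$. Then I would apply Theorem \ref{thm:Matl} with $t = 3$, $\gamma_1 := 9 f_k(\alpha)/M$, $\gamma_2 := \alpha$, $\gamma_3 := 10$, and exponents $b_1 := 1$, $b_2 := n$, $b_3 := -\ell - m$, in the field $\mathbb{K} = \mathbb{Q}(\alpha)$ of degree $k$. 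The new feature is the height of $\gamma_1$: since $M$ is a positive integer with $M < 10^{\ell+1}$, the properties of $h(\cdot)$ together with \eqref{eqh} give $h(\gamma_1) < 8k\log k + 3\ell$, so one may take $A_1 := k(8k\log k + 3\ell)$, while $A_2 := 1$, $A_3 := k \log 10$ and $B := n$ (via \eqref{eq2.4l}) carry over from the proof of Lemma \ref{lem:l}.

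Matveev then yields a lower bound of shape
\[
\log |\Gamma_2| > -C_1 \cdot k^4 \log k \cdot \log n \cdot (8k\log k + 3\ell),
\]
and comparing with $|\Gamma_2| < 3 \cdot 10^{-m}$, then substituting $\ell < 4.3 \cdot 10^{12} k^5 (\log k)^2 \log n$ from Lemma \ref{lem:l} so that the $\ell$-contribution dominates, leads after tracking the numerical constants to $m < 5.3 \cdot 10^{24} k^9 (\log k)^3 (\log n)^2$. The main obstacle is this height bookkeeping: the factor $1/M$ with $M \sim 10^\ell$ inflates $h(\gamma_1)$ by a term linear in $\ell$, so the argument is not self-contained but genuinely requires Lemma \ref{lem:l}; it is precisely this $\ell$-substitution that upgrades the linear dependence on $\log n$ in Lemma \ref{lem:l} into the quadratic dependence $(\log n)^2$ here.
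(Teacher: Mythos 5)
Your proposal is correct and follows essentially the same route as the paper: the same grouping $d_1\cdot 10^{\ell}-(d_1-d_2)$ in the denominator of $\Gamma_2$, the same Galois-conjugation nonvanishing argument, the same Matveev data $(\gamma_1,\gamma_2,\gamma_3;b_1,b_2,b_3)$, and the same key step of feeding Lemma \ref{lem:l} into $h(\gamma_1)$ to produce the $(\log n)^2$ dependence. The only caveat is cosmetic: to land exactly on the constant $5.3\cdot 10^{24}$ you should bound $\log M$ by $(\ell+1)\log 10\approx 2.303\,\ell$ rather than $3\ell$, since the looser estimate inflates the final constant by a factor of about $1.3$.
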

\begin{proof}
Like in the proof of Lemma \ref{lem:l}, we rewrite \eqref{eq2.6m} using \eqref{eq2.3l} as
	\begin{align*}
		\left|\dfrac{1}{9}\left(d_1\cdot 10^{2\ell+m}-(d_1-d_2)\cdot 10^{\ell+m} +(d_1-d_2)\cdot 10^{\ell}-d_1 \right)-f_k(\alpha)\alpha^{n}\right|<\dfrac{1}{2},
	\end{align*}
so that if we consider all $\ell\ge 1$, we have
	\begin{align*}
		\left|f_k(\alpha)\alpha^{n}-\dfrac{1}{9}\left(d_1\cdot 10^{2\ell+m}-(d_1-d_2)\cdot 10^{\ell+m}\right)\right|
		&<\dfrac{1}{2}+\dfrac{1}{9}\left(|d_1-d_2|\cdot 10^{\ell}+d_1 \right)\\
		&<\dfrac{1}{2}+ 10^{\ell}+1 <1.2\cdot 10^{\ell}.
	\end{align*}
Dividing through both sides of the above inequality by $(d_1\cdot 10^{2\ell+m}-(d_1-d_2)\cdot 10^{\ell+m})/9$, we get
	\begin{align}\label{gam3}
	\left|\Gamma_2\right|:=	\left|\dfrac{9 f_k(\alpha)}{(d_1\cdot 10^{\ell}-(d_1-d_2))}\cdot 10^{-\ell-m}\alpha^{n}-1\right|
		&<11\cdot 10^{-m}.
	\end{align}
Again, $\Gamma_2\ne 0$, otherwise we would have
\[
	d_1\cdot 10^{2\ell+m}-(d_1-d_2)\cdot 10^{\ell+m} = 9 f_k(\alpha) \alpha^{n}.
\]
So, if we apply an automorphism from the Galois group of the decomposition field of \(\Psi_k(x)\) over \(\mathbb{Q}\), which maps \(\alpha\) to its conjugate \(\alpha_j\) for \(2 \leq j \leq k\), and then considering the absolute value, we have that for any \(j \geq 2\),
	\[
	10^2 \le  d_1\cdot 10^{2\ell+m}-(d_1-d_2)\cdot 10^{\ell+m} = \left| 9 f_k(\alpha_j) \alpha_j^{n} \right| < 9,
	\]
which is not true. The algebraic number field containing the following $\gamma_i$'s is $\mathbb{K} := \mathbb{Q}(\alpha)$. We have $D = k$, $t :=3$,
	\begin{equation}\nonumber
		\begin{aligned}
			\gamma_{1}&:=9f_k(\alpha)/(d_1\cdot 10^{\ell}-(d_1-d_2)),\quad\gamma_{2}:=\alpha,\qquad~~\gamma_{3}:=10,\\
			b_{1}&:=1,\qquad \qquad\qquad\qquad\qquad\qquad~~~ b_{2}:=n,\qquad\quad b_{3}:=-\ell-m.
		\end{aligned}
	\end{equation}
Notice that 
	\begin{align*}
		h(\gamma_{1})&=h(9f_k(\alpha)/(d_1\cdot 10^{\ell}-(d_1-d_2))
		\le h(9)+h(f_k(\alpha))+h(d_1\cdot 10^{\ell}-(d_1-d_2))\\
		&< \log 9+4k\log \varphi+k\log (k+1)+\log 9+\ell\log 10+2\log 9+\log 2 \\
		&< 5\log 9+8k\log k+\left(4.3\cdot 10^{12}k^5 (\log k)^2\log n\right)\log 10 \\
		&<9.92\cdot 10^{12}k^5 (\log k)^2\log n,
	\end{align*}
	by \eqref{eqh}, so we can take $A_1:=9.92\cdot 10^{12}k^6 (\log k)^2\log n$. As before, $A_2:=1$, $A_3:=k\log 10$, and $B:=n$. 
	
	Now, by Theorem \ref{thm:Matl},
	\begin{align}\label{gam4}
		\log |\Gamma_2| &> -1.4\cdot 30^{6} \cdot 3^{4.5}\cdot k^2 (1+\log k)(1+\log n)\cdot 9.22\cdot 10^{12}k^6 (\log k)^2\log n\cdot 1\cdot k\log 10\nonumber\\
		&> -1.2\cdot 10^{25}k^9 (\log k)^3(\log n)^2.
	\end{align}
	for all $k\ge 2$ and $n\ge 7$. Comparing \eqref{gam3} and \eqref{gam4}, we get
	\begin{align*}
		m\log 10-\log 11 &<1.2\cdot 10^{25}k^9 (\log k)^3(\log n)^2,
	\end{align*}
	which leads to $m<5.3\cdot 10^{24}k^9 (\log k)^3(\log n)^2$.
\end{proof}

At this point, we go back to \eqref{eq2.4m} and use the bounds in Lemmas \ref{lem:l} and \ref{lem:m}, that is
\begin{align*}
	n&<	6(2\ell+m)+2
	<3.2\cdot 10^{25}k^9 (\log k)^3(\log n)^2.
\end{align*}
This gives
\begin{align}\label{boundn}
	n	&<2\cdot 10^{30}k^9 (\log k)^5.
\end{align}

We proceed by distinguishing between two cases on $k$.

\textbf{Case I:} Assuming that $k \leq 1400$, it follows from \eqref{boundn} that  
\[
n	<2\cdot 10^{30}k^9 (\log k)^5 < 8.3 \cdot 10^{63}.
\]
Here, we reduce this upper bound for $n$. To do this, we go back to \eqref{gam1} and recall that
\[
\Gamma_1 := \frac{9}{d_1} \cdot 10^{-2\ell - m} f_k(\alpha) \alpha^{n} - 1 = e^{\Lambda_1} - 1.
\]
Since we already showed that $\Gamma_1 \neq 0$, then $\Lambda_1 \neq 0$. If $\ell \geq 2$, we have $\left| e^{\Lambda_1} - 1 \right| = |\Gamma_1| < 0.5$, which leads to $e^{|\Lambda_1|} \leq 1 + |\Gamma_1| < 1.5$. Thus, 
\[
\left| n \log \alpha - (2\ell + m) \log 10 + \log \left( \frac{9 f_k(\alpha)}{d_1} \right) \right| < \frac{17}{10^\ell}.
\]
We apply the LLL-algorithm for each $k \in [2,1400]$ and $d_1 \in \{1, \dots, 9\}$ to obtain a lower bound for the smallest nonzero value of the above linear form, with integer coefficients bounded in absolute value by $n < 8.3 \cdot 10^{63}$. We consider the lattice  
\[
\mathcal{A} = \begin{pmatrix} 
	1 & 0 & 0 \\ 
	0 & 1 & 0 \\ 
	\lfloor C\log \alpha\rfloor & \lfloor C\log (1/10)\rfloor & \lfloor C\log \left(9 f_k(\alpha)/d_1\right) \rfloor
\end{pmatrix},
\]
with $C := 5.8\cdot 10^{191}$ and $y := (0,0,0)$. Applying Lemma \ref{lem2.5m}, we obtain  
\[
l(\mathcal{L},y) = |\Lambda| > c_1 = 10^{-67} \quad \text{and} \quad \delta = 4.22\cdot 10^{65}.
\]
Via Lemma \ref{lem2.6m}, we have that $S =1.4 \cdot 10^{128}$ and $T = 1.3 \cdot 10^{64}$. Since $\delta^2 \geq T^2 + S$, choosing $c_3 := 17$ and $c_4 := \log 10$, we get $\ell \leq 127$.

Next, we go back to \eqref{gam3}, that is,
$$\Gamma_2:=\dfrac{9 }{(d_1\cdot 10^{\ell}-(d_1-d_2))}\cdot 10^{-\ell-m}f_k(\alpha)\alpha^{n}-1=e^{\Lambda_2}-1,$$
for which we already showed that $\Gamma_2\ne 0$, so $\Lambda_2\ne 0$. Assume for a moment that $m\ge 2$. As before, this implies that 
\begin{align*}
	\left|n \log \alpha - (\ell + m) \log 10 + \log \left( \dfrac{9 f_k(\alpha)}{d_1\cdot 10^{\ell}-(d_1-d_2)} \right)\right|<\dfrac{17}{10^m}.
\end{align*}
For each $k \in [2, 1400]$, $\ell\in[1,124]$, $d_1, d_2\in \{0,\ldots,9\}$, $d_1>0$ and $d_1\ne d_2$, we use the LLL--algorithm to compute a lower bound for the smallest nonzero number of the linear form above with integer coefficients not exceeding $n<8.3\cdot 10^{63}$ in absolute value. So, using the approximation lattice
$$ \mathcal{A}=\begin{pmatrix}
	1 & 0 & 0 \\
	0 & 1 & 0 \\
	\lfloor C\log \alpha\rfloor & \lfloor C\log (1/10)\rfloor& \lfloor C\log \left(9 f_k(\alpha)/(d_1\cdot 10^{\ell}-(d_1-d_2))\right)\rfloor
\end{pmatrix} ,$$
with $C:=  10^{192}$ and $y:=\left(0,0,0\right)$, we apply Lemma \ref{lem2.5m} to get $l\left(\mathcal{L},y\right)=|\Lambda|>c_1=4.7\cdot 10^{-67}$ and $\delta=10^{66}$. However, Lemma \ref{lem2.6m} gives the same values of $S$ and $T$ as before. So choosing $c_3:=17$ and $c_4:=\log 10$, we get $m \le 127$.

Hence, applying inequality \eqref{eq2.4m} yields the upper bound $n \leq 2288$. To complete this case, we implemented a Python-based computational search for all values of $P_n^{(k)}$ with $k \in [2, 1400]$ and $n \in [7, 2288]$ that are palindromic concatenations of two distinct repdigits. This exhaustive check identified only the two cases stated in Theorem~\ref{thm1.1l}; further details are provided in Appendix \ref{app1}.

\textbf{Case II:} If $k > 1400$, then 
\begin{align}\label{boundn2}
	n	<2\cdot 10^{30}k^9 (\log k)^5<\varphi^{k/2}.
\end{align}
We prove the following result.
\begin{lemma}\label{lem:nm}
	Let $(\ell, m, n, k)$ be a solution to the Diophantine equation \eqref{eq:main} with $n \ge 9$, $k>1400$ and $n\ge k+1$. Then
	$$k<4\cdot 10^{34} \qquad\text{and}\qquad n<2\cdot 10^{351}.$$
\end{lemma}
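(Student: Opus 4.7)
The idea is to redo the derivations of Lemmas \ref{lem:l} and \ref{lem:m}, but substitute $\varphi^{2n}/(\varphi+2)$ for $f_k(\alpha)\alpha^{n}$ via the approximation \eqref{pk_b1}; this is legitimate because \eqref{boundn2} together with $\varphi<2$ gives $n<\varphi^{k/2}<2^{k/2}$, which is the hypothesis of \eqref{pk_b1}. The crucial payoff is that after the substitution every algebraic number appearing in Matveev's theorem lies in the quadratic field $\mathbb{Q}(\varphi)=\mathbb{Q}(\sqrt{5})$, so the degree parameter drops from $D=k$ to $D=2$; the corresponding heights become either absolute constants or $O(\ell)$, at the price of an extra $O(\varphi^{-k/2})$ error term in the upper bound on each linear form.

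Concretely, I would combine \eqref{pk_b1} with $|P_n^{(k)}-f_k(\alpha)\alpha^{n}|<1/2$ and \eqref{eq2.3l}, and then divide as in Lemmas \ref{lem:l} and \ref{lem:m}, to obtain two nonzero linear forms
\[
\Gamma_3:=\dfrac{9\,\varphi^{2n}}{(\varphi+2)\,d_1}\cdot 10^{-2\ell-m}-1,\qquad
\Gamma_4:=\dfrac{9\,\varphi^{2n}}{(\varphi+2)(d_1\cdot 10^{\ell}-(d_1-d_2))}\cdot 10^{-\ell-m}-1,
\]
satisfying $|\Gamma_3|<c/10^{\ell}+c'/\varphi^{k/2}$ and $|\Gamma_4|<c/10^{m}+c'/\varphi^{k/2}$ for explicit absolute constants $c,c'$; their non-vanishing is verified by applying the nontrivial automorphism of $\mathbb{Q}(\sqrt{5})$ to produce a numerical contradiction, exactly as for $\Gamma_1,\Gamma_2$. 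Applying Theorem \ref{thm:Matl} with $t=3$ and $D=2$, using $h(9/((\varphi+2)d_1))=O(1)$ for $\Gamma_3$ and $h(9/((\varphi+2)(d_1\cdot 10^{\ell}-(d_1-d_2))))=O(\ell)$ for $\Gamma_4$, yields lower bounds with absolute Matveev constants
\[
\log|\Gamma_3|>-C_1\log n,\qquad\log|\Gamma_4|>-C_2\,\ell\log n.
\]

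Comparing the two sides, each of $|\Gamma_3|$, $|\Gamma_4|$ splits into two subcases according to which of the two terms on the right dominates. From $\Gamma_3$ I get either $\ell<c_1\log n$ or $k<c_2\log n$; from $\Gamma_4$, either $m<c_3\,\ell\log n$ or $k<c_4\,\ell\log n$. In the branch where $\ell$ and $m$ are both directly controlled, \eqref{eq2.4m} gives $n<c_5(\log n)^2$, which already yields a small absolute bound on $n$. In the branches where instead $k$ is bounded by a polynomial in $\log n$ (and possibly $\ell$), I feed this back into the earlier bound \eqref{boundn} to obtain $n<c_6(\log n)^{23}$ or milder. Taking the worst case across the four combinations and estimating numerically will produce the stated inequalities $n<2\cdot 10^{351}$ and $k<4\cdot 10^{34}$. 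The main obstacle will be the bookkeeping: the four sub-branches interact, and one must track the Matveev constants, the $A_i$'s and the height estimates carefully so that they aggregate to precisely the stated numerical values rather than to something weaker.
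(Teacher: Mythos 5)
Your proposal is correct and follows essentially the same route as the paper: replace $f_k(\alpha)\alpha^n$ by $\varphi^{2n}/(\varphi+2)$ via \eqref{pk_b1} (valid since \eqref{boundn2} gives $n<\varphi^{k/2}<2^{k/2}$), form the two degree-two linear forms $\Gamma_3,\Gamma_4$, verify non-vanishing by conjugation in $\mathbb{Q}(\sqrt{5})$, apply Matveev with $D=2$, and split on which of $k/2$ and $\ell\log_\varphi 10$ realizes the minimum before feeding $\log n\ll\log k$ back into \eqref{boundn}. The only cosmetic difference is that the paper absorbs the residual $10^{\ell}$ error in $\Gamma_4$ into the $\varphi^{-k/2}$ term (using $10^{\ell}\le\varphi^{k/2}$ in that branch), so it avoids your extra $10^{-m}$ sub-branch, but this does not change the argument or the quality of the bounds.
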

\begin{proof}
Since $k>1400$, then \eqref{boundn2} holds and we can use the sharper estimate in \eqref{pk_b1} together with \eqref{eq2.3l} and write
\begin{align*}
	\bigg|\dfrac{1}{9}\left(d_1\cdot 10^{2\ell+m}-(d_1-d_2)\cdot 10^{\ell+m} +(d_1-d_2)\cdot 10^{\ell}-d_1 \right)&-\dfrac{\varphi^{2n}}{\varphi+2}\bigg|
	=	\left| P_n^{(k)}-\dfrac{\varphi^{2n}}{\varphi+2}\right| \\
	&\le 	\left|P_n^{(k)}- f_k(\alpha)\alpha^{n} \right|+\left|f_k(\alpha)\alpha^{n} -\dfrac{\varphi^{2n}}{\varphi+2}\right| \\
	&< \dfrac{1}{2} + \left(\dfrac{\varphi^{2n}}{\varphi+2}\right)\frac{4}{\varphi^{k/2}}.
\end{align*}
Therefore
\begin{align*}
	\left|\dfrac{1}{9}\cdot d_1\cdot 10^{2\ell+m}-\dfrac{\varphi^{2n}}{\varphi+2}\right|&<\dfrac{1}{2} + \left(\dfrac{\varphi^{2n}}{\varphi+2}\right)\frac{4}{\varphi^{k/2}}+\dfrac{1}{9}\left(|d_1-d_2|\cdot 10^{\ell+m} +|d_1-d_2|\cdot 10^{\ell}+d_1 \right)\\
	&\le \dfrac{1}{2} +\left(\dfrac{\varphi^{2n}}{\varphi+2}\right)\frac{4}{\varphi^{k/2}}+\left( 10^{\ell+m}+10^\ell+1\right)\\
	&< \left(\dfrac{\varphi^{2n}}{\varphi+2}\right)\frac{4}{\varphi^{k/2}}+3\cdot 10^{\ell+m}.
\end{align*}
Now, since $10^{2\ell+m-3}<\alpha^{n-1}<\varphi^{2n-2}$, we have that $10^{\ell+m}<10^{3-\ell}\varphi^{2n-2}$. Thus
\begin{align*}
	\left|\dfrac{1}{9}\cdot d_1\cdot 10^{2\ell+m}-\dfrac{\varphi^{2n}}{\varphi+2}\right|
	&<\left(\dfrac{\varphi^{2n}}{\varphi+2}\right)\frac{4}{\varphi^{k/2}}+3\cdot 10^{3-\ell}\varphi^{2n-2}.
\end{align*}
Dividing both sides by $\varphi^{2n}/(\varphi+2)$ gives
\begin{align*}
	\left|\dfrac{d_1}{9}\cdot 10^{2\ell+m}(\varphi+2)\varphi^{-2n}-1\right| &< \frac{4}{\varphi^{k/2}}+\dfrac{3000\cdot 10^{-\ell}(\varphi+2)}{\varphi^{2}}
	< \frac{4}{\varphi^{k/2}}+\dfrac{4146 }{10^{\ell}}.
\end{align*}
Therefore,
\begin{align}\label{gam5}
|\Gamma_3|:=\left|\dfrac{d_1}{9}\cdot 10^{2\ell+m}(\varphi+2)\varphi^{-2n}-1\right| 	&<\dfrac{4150}{\varphi^{\min\{k/2,\, \ell\log_\varphi 10\}}}.
\end{align}
Clearly $\Gamma_3\ne 0$, otherwise we would have
\[
d_1\cdot 10^{2\ell + m}(\varphi+2) = 9\cdot \varphi^{2n}.
\]
Conjugating the above relation in $\mathbb{Q}(\sqrt{5})$, we would have 
\[
1381<d_1\cdot 10^{2\ell + m}(\bar{\varphi}+2) = 9\cdot \bar{\varphi}^{2n}<9,
\]
which is not true for all $n\ge 7$. The algebraic number field containing the following $\gamma_i$'s is $\mathbb{K} := \mathbb{Q}(\sqrt{5})$. We have $D = 2$, $t :=3$,
\begin{equation}\nonumber
	\begin{aligned}
		\gamma_{1}&:=d_1(\varphi+2)/9,\quad~\gamma_{2}:=10,\qquad\qquad\gamma_{3}:=\varphi,\\
		b_{1}&:=1,\qquad \qquad\qquad b_{2}:=2\ell+m,\qquad b_{3}:=-2n.
	\end{aligned}
\end{equation}
Next,
\begin{align*}
h(\gamma_{1})&\le h(9)+h(d_1)+h(\varphi+2)<4\log 9<9
\end{align*}
$h(\gamma_{2})=h(10)=\log 10$ and $h(\gamma_{3})=h(\varphi)= 0.5\log \varphi $. Therefore, we take $A_1:=18$, $A_2:=2\log 10$ and $A_3:=\log \varphi$. Next, $B :=2n\geq \max\{|b_i|:i=1,2,3\}$ and by Theorem \ref{thm:Matl}, we have
\begin{align}\label{gam6}
	\log |\Gamma_3| &> -1.4\cdot 30^{6} \cdot 3^{4.5}\cdot 2^2 (1+\log 2)(1+\log 2n)\cdot 18\cdot 2\log 10\cdot \log \varphi\nonumber\\
	&> -7.3\cdot 10^{13}\log n.
\end{align}
for all $n\ge 7$. Comparing \eqref{gam5} and \eqref{gam6}, we get
\begin{align*}
	\min\{k/2,\, \ell\log_\varphi 10\}\log \varphi-\log 4150 &<7.3\cdot 10^{13}\log n.
\end{align*}
Next, we investigate two cases.
\begin{enumerate}[(a)]
	\item If $\min\{k/2,\, \ell\log_\varphi 10\}:=k/2$, then $(k/2)\log \varphi-\log 4150 <7.3\cdot 10^{13}\log n$. Therefore,
	\begin{align*}
		k &<3.1\cdot 10^{14}\log n.
	\end{align*}
By \eqref{boundn2}, $n	<2\cdot 10^{30}k^9 (\log k)^5$ and so $\log n<25\log k$, for all $k> 1400$. Therefore, we have $$k<7.8\cdot 10^{15}\log k,$$
which gives $k<5.8\cdot 10^{17}$.
	
	\item If $\min\{k/2,\, \ell\log_\varphi 10\}:=\ell\log_\varphi 10$, then $(\ell\log_\varphi 10)\log \varphi-\log 4150 <7.3\cdot 10^{13}\log n$, and so
	\begin{align*}
		\ell &<3.2\cdot 10^{13}\log n.
	\end{align*}
We proceed as in \eqref{gam5} via
\begin{align*}
	\left|\dfrac{1}{9}\left(d_1\cdot 10^{2\ell+m}-(d_1-d_2)\cdot 10^{\ell+m}  \right)-\dfrac{\varphi^{2n}}{\varphi+2}\right|
	&< \dfrac{1}{2} + \left(\dfrac{\varphi^{2n}}{\varphi+2}\right)\frac{4}{\varphi^{k/2}}+\dfrac{1}{9}\left(|d_1-d_1|\cdot 10^{\ell}+d_1\right).
\end{align*}
Therefore
\begin{align*}
	\left|\dfrac{1}{9}\left(d_1\cdot 10^{\ell}-(d_1-d_2)\right)10^{\ell+m}-\dfrac{\varphi^{2n}}{\varphi+2}\right|&< \dfrac{1}{2} +\left(\dfrac{\varphi^{2n}}{\varphi+2}\right)\frac{4}{\varphi^{k/2}}+\left( 10^\ell+1\right)\\
	&< \left(\dfrac{\varphi^{2n}}{\varphi+2}\right)\frac{4}{\varphi^{k/2}}+2\cdot 10^{\ell}.
\end{align*}
Dividing both sides by $\varphi^{2n}/(\varphi+2)$, we get that
\begin{align*}
	\left|\dfrac{1}{9}\left(d_1\cdot 10^{\ell}-(d_1-d_2)\right)10^{\ell+m}(\varphi+2)\varphi^{-2n}-1\right| &
	< \frac{4}{\varphi^{k/2}}+\dfrac{2\cdot 10^{\ell} }{\varphi^{2n}}.
\end{align*}
Now, notice that since $\ell\log_\varphi 10\le k/2$, (i.e., $10^\ell\le \varphi^{k/2}$) and $n\ge k+2>k/2$, then
\begin{align}\label{gam7}
	|\Gamma_4|:=\left|\dfrac{1}{9}\left(d_1\cdot 10^{\ell}-(d_1-d_2)\right)10^{\ell+m}(\varphi+2)\varphi^{-2n}-1\right|	&<\dfrac{6}{\varphi^{k/2}}.
\end{align}
Clearly $\Gamma_4\ne 0$, otherwise we would have
\[
\left(d_1\cdot 10^{\ell}-(d_1-d_2)\right)10^{\ell+m}(\varphi+2)= 9\cdot \varphi^{2n}.
\]
Conjugating the above relation in $\mathbb{Q}(\sqrt{5})$, we would have 
\[
361<\left(d_1\cdot 10^{\ell}-(d_1-d_2)\right)10^{\ell+m}(\bar{\varphi}+2) = 9\cdot \bar{\varphi}^{2n}<9,
\]
which is false for all $n\ge 7$. The algebraic number field containing the following $\gamma_i$'s is $\mathbb{K} := \mathbb{Q}(\sqrt{5})$. We have $D = 2$, $t :=3$,
\begin{equation}\nonumber
	\begin{aligned}
		\gamma_{1}&:=\left(d_1\cdot 10^{\ell}-(d_1-d_2)\right)(\varphi+2)/9,\quad~\gamma_{2}:=10,\qquad\qquad\gamma_{3}:=\varphi,\\
		b_{1}&:=1,\qquad \qquad\qquad\qquad\qquad\qquad\qquad~ b_{2}:=\ell+m,\qquad~~ b_{3}:=-2n.
	\end{aligned}
\end{equation}
As before, $A_2:=2\log 10$, $A_3:=\log \varphi$ and $B:=2n$. For $A_1$, we first find
\begin{align*}
	h(\gamma_{1})&=h(\left(d_1\cdot 10^{\ell}-(d_1-d_2)\right)(\varphi+2)/9)\\
	&\le h(d_1)+h(10^\ell)+h(d_1-d_2)+h(\varphi+2)+h(9)+2\log 2\\
&<7\log 9 +\ell\log 10+0.5	\log (\varphi+2)\\
&<7\log 9 +3.2\cdot 10^{13}\log n\log 10+0.5	\log (\varphi+2)\\
&<7.4\cdot 10^{13}\log n,
\end{align*}
Therefore, we take $A_1:=1.5\cdot 10^{14}\log n$. By Theorem \ref{thm:Matl}, we have
\begin{align}\label{gam8}
	\log |\Gamma_4| &> -1.4\cdot 30^{6} \cdot 3^{4.5}\cdot 2^2 (1+\log 2)(1+\log 2n)\cdot 1.5\cdot 10^{14}\log n\cdot 2\log 10\cdot \log \varphi\nonumber\\
	&> -6.2\cdot 10^{26}(\log n)^2.
\end{align}
for all $n\ge 7$. Comparing \eqref{gam7} and \eqref{gam8}, we get
\begin{align*}
	(k/2)\log \varphi-\log 6 &<6.2\cdot 10^{26}(\log n)^2.
\end{align*}
so that $$k<2.6\cdot 10^{27}(\log n)^2.$$
	
Since $n	<2\cdot 10^{30}k^9 (\log k)^5$ and so $\log n<25\log k$, for all $k> 1400$, we have 
$$k<2\cdot 10^{30}(\log k)^2,$$
which gives $k<4\cdot 10^{34}$ and thus $n<2\cdot 10^{351}$.
\end{enumerate}
This completes the proof.
\end{proof}

We reduce the bounds in Lemma \ref{lem:nm}. To do this, we revisit \eqref{gam5} and recall that
\[
\Gamma_3 := \dfrac{d_1}{9}\cdot 10^{2\ell+m}(\varphi+2)\varphi^{-2n}-1.
\]
Since we showed that $\Gamma_3 \neq 0$, then $\Lambda_3 \neq 0$ and we obtain the inequality  
\[
\left| e^{\Lambda_3} - 1 \right| = |\Gamma_3| < 0.5,
\]
which leads to $e^{|\Lambda_3|} \leq 1 + |\Gamma_3| < 1.5$. Therefore 
\[
\left|  \log \left(\dfrac{d_1(\varphi+2)}{9}\right) + (2\ell + m) \log 10 -n \log (\varphi^2)  \right| < \dfrac{6225}{\varphi^{\min\{k/2,\, \ell\log_\varphi 10\}}}.
\]
For each $d_1\in\{1,\ldots , 9\}$, we apply the LLL-algorithm to obtain a lower bound for the smallest nonzero value of the above linear form, bounded by integer coefficients with absolute values at most $2 \cdot 10^{351}$. Here, we consider the lattice  
\[
\mathcal{A}_1 = \begin{pmatrix} 
	1 & 0 & 0 \\ 
	0 & 1 & 0 \\ 
	\lfloor C\log (d_1(\varphi+2)/9)\rfloor & \lfloor C\log 10\rfloor & \lfloor C\log (1/\varphi^2) \rfloor
\end{pmatrix},
\]
where we set $C := 10^{1055}$ and $y := (0,0,0)$. Applying Lemma \ref{lem2.5m}, we obtain  
\[
l(\mathcal{L},y) = |\Lambda| > c_1 = 10^{-355} \quad \text{and} \quad \delta = 1.096\cdot 10^{352}.
\]
Using Lemma \ref{lem2.6m}, we conclude that $S = 1.2 \cdot 10^{703}$ and $T = 3.1 \cdot 10^{351}$. Since $\delta^2 \geq T^2 + S$, then choosing $c_3 := 6225$ and $c_4 := \log \varphi$, we get  $\min\{k/2,\, \ell\log_\varphi 10\} \leq 3382$. 

We proceed the analysis in two ways.
\begin{enumerate}[(a)]
	\item If $\min\{k/2,\, \ell\log_\varphi 10\}:=k/2$, then $k/2 \leq 3382$, or $k\le 6764$.

	\item If $\min\{k/2,\, \ell\log_\varphi 10\}:=\ell\log_\varphi 10$, then $\ell\log_\varphi 10\le 3382$. Therefore $\ell \le 706$.
	We revisit equation \eqref{gam7} and recall the expression
	\[
\Gamma_4:=\dfrac{1}{9}\left(d_1\cdot 10^{\ell}-(d_1-d_2)\right)10^{\ell+m}(\varphi+2)\varphi^{-2n}-1.
	\]
	Since we have already shown that \(\Gamma_4 \neq 0\), it follows that \(\Lambda_4 \neq 0\). So, we arrive at
	\[
	\left|  \log \left(\frac{\left(d_1\cdot 10^{\ell}-(d_1-d_2)\right)(\varphi+2)}{9}\right) + (\ell + m) \log 10 -n \log (\varphi^2)  \right| < \frac{9}{\varphi^{k/2}}.
	\]
For each $\ell\in[1,706]$, $d_1, d_2\in \{0,\ldots,9\}$, $d_1>0$ and $d_1\ne d_2$, we use the LLL--algorithm to compute a lower bound for the smallest nonzero number of the linear form above with integer coefficients not exceeding $n<2\cdot 10^{351}$ in absolute value. So, we consider the lattice
	\[
	\mathcal{A}_2 = \begin{pmatrix} 
		1 & 0 & 0 \\ 
		0 & 1 & 0 \\ 
		\lfloor C\log \left(\left(d_1\cdot 10^{\ell}-(d_1-d_2)\right)(\varphi+2)/9\right)\rfloor & \lfloor C\log 10\rfloor & \lfloor C\log (1/\varphi^2) \rfloor
	\end{pmatrix},
	\]
	where we define \(C := 10^{1054}\) and set \(y := (0,0,0)\) as before. With the same values for $\delta$, $S$ and $T$ as before, we choose \(c_3 := 9\) and \(c_4 := \log \varphi\). It follows that $k/2 \leq 3364$ from which we get $k\le 6728$.
	
\end{enumerate}
Therefore, we always have $k\le 6764$. 

The above restriction on $k$ implies, through inequality \eqref{boundn2}, that $n < 10^{70}$. We therefore proceed with a second round of reduction using this updated bound on $n$. Going back to equation \eqref{gam5}, as previously done, we use the LLL-algorithm to estimate a lower bound for the smallest nonzero value of the corresponding linear form, under the assumption that the involved integer coefficients are bounded above by $n < 10^{70}$. We use the same lattice  $\mathcal{A}_1$ as before but now $C := 10^{211}$ and $y := (0,0,0)$. By applying Lemma \ref{lem2.5m}, we get  
\[
l(\mathcal{L},y) = |\Lambda| > c_1 = 10^{-72} \quad \text{and} \quad \delta = 4.1\cdot 10^{71}.
\]
Using Lemma \ref{lem2.6m}, we conclude that $S = 3 \cdot 10^{140}$ and $T = 1.51 \cdot 10^{70}$. Since $\delta^2 \geq T^2 + S$, and choosing $c_3 := 6225$ and $c_4 := \log \varphi$, it follows that $\min\{k/2,\, \ell\log_\varphi 10\} \leq 685$. 

We again analyze two cases:
\begin{enumerate}[(a)]
	\item If $\min\{k/2,\, \ell\log_\varphi 10\} := k/2$, then $k/2 \leq 685$, which implies $k \leq 1370$. This contradicts  the working assumption $k>1400$.
	
	\item If $\min\{k/2,\, \ell\log_\varphi 10\} := \ell\log_\varphi 10$, then $\ell\log_\varphi 10 \leq 685$, giving $\ell \leq 143$.
	
	Revisiting equation \eqref{gam7}, we again apply the LLL-algorithm, with integer coefficients restricted to absolute values below $n <  10^{70}$. By the same lattice $\mathcal{A}_2$ as before, we have $C := 10^{211}$ and $y := (0,0,0)$. Using Lemma \ref{lem2.5m} and Lemma \ref{lem2.6m}, we have $\delta=2.2\cdot 10^{71}$, $S = 3 \cdot 10^{140}$ and $T = 1.51 \cdot 10^{70}$. Selecting $c_3 := 9$ and $c_4 := \log \varphi$, we deduce that $k/2 \leq 672$, leading to $k \leq 1344$. This again contradicts  the working assumption $k>1400$.
	
\end{enumerate}
Therefore, the proof is complete. \qed

\section*{Acknowledgments} 
The first author was funded by the cost centre 0730 of the Mathematics Division at Stellenbosch University.

\section*{Addresses}

$ ^{1} $ Mathematics Division, Stellenbosch University, Stellenbosch, South Africa.

Email: \url{hbatte91@gmail.com}\\
$ ^{2} $ Department of Mathematics, Makerere University, Kampala, Uganda.

Email: \url{dariusguma2@gmail.com}

\newpage
\appendix
\section{Appendices}

\subsection{Py Code I}\label{app1}
\begin{verbatim}
def k_pell_sequence(k, n_max):
    """Generates the k-Pell sequence up to n_max terms."""
    P = [0] * (k - 1) + [1]  # P_{2-k} to P_1
    for _ in range(n_max - len(P) + 1):
       term = 2 * P[-1] + sum(P[-2:-k-1:-1])  # 2*P_{n-1} + P_{n-2} + ... + P_{n-k}
       P.append(term)
    return P
def is_palindromic_concatenation(num):
    s = str(num)
    n = len(s)
    for l in range(1, n // 2 + 1):
        m = n - 2 * l
        if m <= 0:
           continue
        part1 = s[:l]
        part2 = s[l:l + m]
        part3 = s[l + m:]
        if (
           part1 == part3
           and len(set(part1)) == 1
           and len(set(part2)) == 1
           and part1 != part2
        ):
           return True
        return False
def find_all_palindromic_k_pells(k_min=2, k_max=1400, n_max=2288):
    results = []
    for k in range(k_min, k_max + 1):
        pells = k_pell_sequence(k, n_max)
        for i, val in enumerate(pells):
            math_n = i + 2 - k  # Convert index to actual P_n^{(k)}
            if math_n < 2:
                continue
            if val > 0 and is_palindromic_concatenation(val):
                results.append((k, math_n, val))
     return results
pell_results = find_all_palindromic_k_pells()
print("\nFound palindromic repdigit k-Pell numbers:")
for k, n, value in pell_results:
    print(f"P_{n}^({k}) = {value}")
\end{verbatim}

\end{document}